\documentclass[journal]{IEEEtran}


%
\usepackage{amsmath}
%
\interdisplaylinepenalty=2500
\usepackage{amsthm}
\usepackage{amssymb}

\newtheorem{thm}{Theorem}
\newtheorem{lem}{Lemma}

\newtheorem{defn}{Definition}
\newtheorem{rem}{Remark}



\begin{document}
%
\title{Necessary and Sufficient Conditions for\\ Difference Flatness}
%
%
%

\author{Bernd Kolar, Johannes Diwold, and Markus Sch\"{o}berl%
\thanks{The first author and the second author have been supported by the Austrian Science Fund (FWF) under grant number P~29964 and P~32151.}
\thanks{All authors are with the Institute of Automatic Control and Control Systems Technology, Johannes Kepler University Linz, Altenbergerstra\ss e 69, 4040 Linz, Austria (e-mail: bernd\underline{\ }kolar@ifac-mail.org, johannes.diwold@jku.at, markus.schoeberl@jku.at). }}

\maketitle

\begin{abstract}
We show that the flatness of a nonlinear discrete-time system can be checked by computing a unique sequence of involutive distributions. 
The well-known test for static feedback linearizability is included as a special case.
Since the computation of the sequence of distributions requires only the solution of algebraic equations, it allows an efficient implementation in a computer algebra program.
In case of a positive result, a flat output can be obtained by straightening out the involutive distributions with the Frobenius theorem.

\textit{\copyright \ 2022 IEEE.  Personal use of this material is permitted.  Permission from IEEE must be obtained for all other uses, in any current or future media, including reprinting/republishing this material for advertising or promotional purposes, creating new collective works, for resale or redistribution to servers or lists, or reuse of any copyrighted component of this work in other works.\\ DOI 10.1109/TAC.2022.3151615}
\end{abstract}

\begin{IEEEkeywords}
Difference flatness, Differential-geometric methods, Discrete-time systems, Feedback linearization, Nonlinear control systems, Normal forms.
\end{IEEEkeywords}

%
\IEEEpeerreviewmaketitle

\section{Introduction}

The concept of flatness has been introduced by Fliess, L{\'e}vine,
Martin and Rouchon in the 1990s for nonlinear continuous-time systems
(\cite{FliessLevineMartinRouchon:1992}, \cite{FliessLevineMartinRouchon:1995},
\cite{FliessLevineMartinRouchon:1999}). For discrete-time systems,
flatness can be defined analogously to the continuous-time case in
a straightforward way by replacing time derivatives with forward-shifts
(see e.g. \cite{Sira-RamirezAgrawal:2004}, \cite{KaldmaeKotta:2013},
or \cite{KolarKaldmaeSchoberlKottaSchlacher:2016}). To distinguish
both concepts, the terms differential flatness and difference flatness
are commonly used. With this definition, like in the continuous-time
case, flatness is equivalent to linearizability by an endogenous dynamic
feedback as it is defined for discrete-time systems in \cite{Aranda-BricaireMoog:2008}.
We do not consider backward-shifts like in \cite{GuillotMillerioux:2020}
or \cite{MilleriouxJungers:2021}. According to
the terminology of \cite{Aranda-BricaireMoog:2008}, such systems
would be linearizable by an exogenous dynamic feedback. The general
dynamic feedback linearization problem and related topics like partial
feedback linearization are studied e.g. in \cite{Aranda-BricaireKottaMoog:1996},
\cite{CalifanoMonacoNormand:1999}, or \cite{CalifanoMonacoNormand:2002}.

The static feedback linearization problem is a special case of the
endogenous dynamic feedback linearization problem and has been solved
for discrete-time systems in \cite{Grizzle:1986}, \cite{Jakubczyk:1987},
and \cite{Aranda-BricaireKottaMoog:1996}. The approach of \cite{Grizzle:1986}
is similar to the well-known approach of \cite{JakubczykRespondek:1980}
and \cite{vanderSchaft:1984} for continuous-time systems, and allows
to check whether a discrete-time system is static feedback linearizable
or not by computing a certain sequence of involutive distributions.
This test requires only the solution of algebraic equations. Subsequently,
a linearizing output can be obtained by straightening out these distributions
with the Frobenius theorem, which requires the solution of ODEs.

For checking the flatness of nonlinear discrete-time systems, until
now no comparable, computationally similarly efficient test is available.
In \cite{Sato:2012} a sufficient condition for flatness is given,
and \cite{KaldmaeKotta:2013} formulates necessary and sufficient
conditions that are analogous to those of \cite{Levine:2011} for
continuous-time systems and rather hard to check. More recently, in
\cite{KolarSchoberlDiwold:2019} we have shown that every flat discrete-time
system can be decomposed by coordinate transformations into a subsystem
and an endogenous dynamic feedback. The importance of this observation
lies in the fact that the complete system is flat if and only if the
subsystem is flat. Thus, a repeated application of the decomposition
allows to check whether a system with an $n$-dimensional state space
is flat or not in at most $n-1$ steps and yields a flat output. However,
since the transformations that achieve the decompositions are constructed
by straightening out vector fields / distributions with the flow-box
theorem / Frobenius theorem, this procedure is still not as computationally
efficient as the solution of the static feedback linearization problem
in \cite{Grizzle:1986}. Therefore, the present paper proposes an
a priori test which only checks whether the repeated decompositions
are possible or not, without actually performing them. A similar problem
is addressed in \cite{Schlacher:2019}, but with a completely different
approach that is based on certain normalized system representations
and exterior algebra. Here, in contrast, we introduce a straightforward
generalization of the sequence of distributions from the static feedback
linearization test of \cite{Grizzle:1986}. The difference to the
sequence of \cite{Grizzle:1986} is that in every step of its construction
we do not proceed with the previously constructed distribution itself
but rather with its largest ``projectable'' subdistribution. Like
in \cite{Grizzle:1986} the proposed sequence of distributions is
unique, allows an efficient computation, and the system is flat if
and only if the dimension of the last distribution is equal to the
dimension of the state space. Thus, we separate the problem of checking
flatness from the problem of finding a flat output in the same way
as it is possible for static feedback linearizability. If the test
yields a positive result, a flat output can be obtained by straightening
the distributions out with the Frobenius theorem.

The paper is organized as follows: In Section \ref{sec:DiscreteTimeSystems_and_Flatness}
and Section \ref{sec:Projectable_Distributions} we recapitulate the
concept of flatness for discrete-time systems and provide some background
on projectable vector fields and distributions, which is the mathematical
foundation for our conditions. In particular, we prove that the largest
projectable subdistribution of a given distribution is unique and
show how it can be computed. In Section \ref{sec:Necessary_and_Sufficient}
we present our main result: We introduce a sequence of distributions
which generalizes the sequence of distributions from the static feedback
linearization test, and show that it gives rise to necessary and sufficient
conditions for flatness. An example in Section \ref{sec:Example}
illustrates our results.

\section{\label{sec:DiscreteTimeSystems_and_Flatness}Discrete-Time Systems
and Flatness}

In this contribution we consider discrete-time systems
\begin{equation}
x^{i,+}=f^{i}(x,u)\,,\quad i=1,\ldots,n\label{eq:sys}
\end{equation}
in state representation with $\dim(x)=n$, $\dim(u)=m$, and smooth
functions $f^{i}(x,u)$ that satisfy the submersivity condition
\begin{equation}
\mathrm{rank}(\partial_{(x,u)}f)=n\,.\label{eq:submersivity}
\end{equation}
Submersivity is a usual assumption in the discrete-time literature,
and since it is necessary for accessibility (see e.g. \cite{Grizzle:1993}),
it is actually no restriction.\footnote{In \cite{GuillotMillerioux:2020}, discrete-time flatness is considered
also for non-submersive and hence non-accessible systems. However,
without accessibility, the practical applicability is significantly
reduced.} Geometrically, the system (\ref{eq:sys}) can be interpreted as a
map
\begin{equation}
f:\mathcal{X}\times\mathcal{U}\rightarrow\mathcal{X}\text{\textsuperscript{+}}\label{eq:map_f}
\end{equation}
from a manifold $\mathcal{X}\times\mathcal{U}$ with coordinates $(x,u)$
to a manifold $\mathcal{X}\text{\textsuperscript{+}}$ with coordinates
$x^{+}$. The condition (\ref{eq:submersivity}) ensures that this
map is a submersion and therefore locally surjective. The notation
with a superscript $+$ is used to denote the forward-shift of the
corresponding variable. For the inputs and flat outputs we also need
higher forward-shifts, and use a subscript in brackets. For instance,
$u_{[\alpha]}$ denotes the $\alpha$-th forward-shift of $u$. To
keep formulas short and readable we use the Einstein summation convention,
and in order to avoid mathematical subtleties we also assume that
all functions are smooth. Furthermore, since we use the inverse- and
the implicit function theorem, the flow-box theorem, and the Frobenius
theorem, it is important to emphasize that all our results are only
local. Thus, in order not to loose localness, we have to ensure that
the value of $x^{+}$ determined by (\ref{eq:sys}) is sufficiently
close to $x$. Since the map (\ref{eq:sys}) is continuous, this can
be achieved by considering only a sufficiently small neighborhood
of an equilibrium point $(x_{0},u_{0})$. This is a common practice
in the discrete-time literature, see e.g. \cite{Kotta:1995}. For
instance, the closely related discrete-time static feedback linearization
problem is also considered around an equilibrium point, see \cite{Grizzle:1986}
or \cite{NijmeijervanderSchaft:1990}. For continuous-time systems
such a problem does not appear, since by considering a sufficiently
small time interval it can always be ensured that the solution stays
arbitrarily close to the initial state.

In the following, we summarize the concept of difference flatness.
For this purpose, we introduce a space with coordinates $(x,u,u_{[1]},u_{[2]},\ldots)$
and the forward-shift operator $\delta_{xu}$, which acts on a function
$g$ according to the rule
\[
\delta_{xu}(g(x,u,u_{[1]},u_{[2]},\ldots))=g(f(x,u),u_{[1]},u_{[2]},u_{[3]},\ldots)\,.
\]
A repeated application of $\delta_{xu}$ is denoted by $\delta_{xu}^{\alpha}$.
Since in an equilibrium $(x_{0},u_{0})$ the input is kept constant,
in this framework it corresponds to a point $(x_{0},u_{0},u_{0},u_{0},\ldots)$.
\begin{defn}
\label{def:flatness}The system (\ref{eq:sys}) is said to be flat
around an equilibrium $(x_{0},u_{0})$, if the $n+m$ coordinate functions
$x$ and $u$ can be expressed locally by an $m$-tuple of functions
\begin{equation}
y^{j}=\varphi^{j}(x,u,u_{[1]},\ldots,u_{[q]})\,,\quad j=1,\ldots,m\label{eq:flat_output}
\end{equation}
and their forward-shifts
\[
\begin{array}{ccl}
y_{[1]} & = & \delta_{xu}(\varphi(x,u,u_{[1]},\ldots,u_{[q]}))\\
y_{[2]} & = & \delta_{xu}^{2}(\varphi(x,u,u_{[1]},\ldots,u_{[q]}))\\
 & \vdots
\end{array}
\]
up to some finite order. The $m$-tuple (\ref{eq:flat_output}) is
called a flat output.
\end{defn}

With this definition, flatness is equivalent to endogenous dynamic
feedback linearizability as it is defined in \cite{Aranda-BricaireMoog:2008}.
The representation of $x$ and $u$ by the flat output and its forward-shifts
is unique, and has the form\footnote{The multi-index $R=(r_{1},\ldots,r_{m})$ contains the number of forward-shifts
of each component of the flat output which is needed to express $x$
and $u$, and $y_{[0,R]}$ is an abbreviation for $y$ and its forward-shifts
up to order $R$.}
\begin{equation}
\begin{array}{cclcl}
x^{i} & = & F_{x}^{i}(y_{[0,R-1]})\,, & \quad & i=1,\ldots,n\\
u^{j} & = & F_{u}^{j}(y_{[0,R]})\,, & \quad & j=1,\ldots,m\,.
\end{array}\label{eq:flat_param}
\end{equation}
Further details can be found e.g. in \cite{KolarSchoberlDiwold:2019}.
For the proof of our main result, the necessary and sufficient condition
of Theorem \ref{thm:necessary_and_sufficient_conditions} in Section
\ref{sec:Necessary_and_Sufficient}, we need in particular the following
two lemmas.
\begin{lem}
\label{lem:basic_decomposition_flat} (\cite{KolarKaldmaeSchoberlKottaSchlacher:2016},
\cite{KolarSchoberlDiwold:2019}) A system of the form
\begin{equation}
\begin{array}{ll}
x_{1}^{i_{1},+}=f_{1}^{i_{1}}(x_{1},x_{2},u_{1})\,, & i_{1}=1,\ldots,n-m_{2}\\
x_{2}^{i_{2},+}=f_{2}^{i_{2}}(x_{1},x_{2},u_{1},u_{2})\,,\quad & i_{2}=1,\ldots,m_{2}
\end{array}\label{eq:basic_decomposition_flat}
\end{equation}
with $\dim(u_{2})=\dim(x_{2})=m_{2}$ and $\mathrm{rank}(\partial_{u}f)=\dim(u)=m$
is flat if and only if the subsystem
\begin{equation}
x_{1}^{+}=f_{1}(x_{1},x_{2},u_{1})\label{eq:basic_decomposition_flat_subsys}
\end{equation}
with the $m$ inputs $(x_{2},u_{1})$ is flat.
\end{lem}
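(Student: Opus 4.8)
The plan is to exploit the structural fact that, under the rank condition $\mathrm{rank}(\partial_{u}f)=m$, the second block of (\ref{eq:basic_decomposition_flat}) can be inverted for $u_2$. Since $f_1$ is independent of $u_2$, the block $\partial_{u_2}f_1$ vanishes, so full column rank $m$ of $\partial_u f$ forces the square block $\partial_{u_2}f_2$ to be regular. The implicit function theorem then yields a local map $u_2 = \psi(x_1,x_2,u_1,x_2^{+})$ satisfying $f_2(x_1,x_2,u_1,\psi(x_1,x_2,u_1,x_2^{+}))=x_2^{+}$. This $\psi$ is the endogenous dynamic feedback coupling the two systems: the full system and the subsystem (\ref{eq:basic_decomposition_flat_subsys}) share the same $x_1$-dynamics $f_1$ and the same number $m$ of inputs, with $(x_2,u_1)$ playing the role of the subsystem inputs while $x_2$ is a state of the full system driven by $u_2$.

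For the direction from flatness of the subsystem to flatness of the full system, I would start from a flat output $\hat y$ of (\ref{eq:basic_decomposition_flat_subsys}) together with its parametrization of $x_1$, $x_2$ and $u_1$ by $\hat y$ and its forward-shifts, which satisfies the subsystem analogue of (\ref{eq:sys_identity_y}), namely $\delta_{\hat y}(x_1\text{-part}) = f_1(\ldots)$. I then extend this to a parametrization of the full system by appending $u_2 = \psi(x_1,x_2,u_1,x_2^{+})$, where $x_2^{+}$ is obtained by forward-shifting the parametrization of $x_2$. Establishing flatness of (\ref{eq:basic_decomposition_flat}) then reduces to checking the two blocks of (\ref{eq:sys_identity_y}): the $x_1$-block holds because it coincides with the subsystem identity, and the $x_2$-block holds identically by the defining property $f_2\circ\psi = x_2^{+}$. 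Since $\hat y$ is already a function of the system variables, it is a flat output of the full system.

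For the converse, assuming the full system flat, I would take a flat output $y$ of (\ref{eq:basic_decomposition_flat}) with its parametrization (\ref{eq:flat_parametrization}) of $x_1,x_2,u_1,u_2$. Discarding the $u_2$-component leaves a parametrization of $x_1$ and of the subsystem inputs $(x_2,u_1)$; dropping these rows preserves the linear independence of the Jacobian rows, so it is still a submersion, and the remaining $x_1$-block of (\ref{eq:sys_identity_y}) is precisely the consistency identity required for the subsystem, the inputs $(x_2,u_1)$ carrying no dynamic constraint. It remains to confirm that $y$ is expressible in the variables of the subsystem, which follows by substituting $u_2=\psi(x_1,x_2,u_1,x_{2,[1]})$ and its shifts into $y=\varphi(x_1,x_2,u_1,u_2,\ldots)$, turning $y$ into a function of $x_1$, the subsystem inputs $(x_2,u_1)$ and their forward-shifts. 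Hence $y$ is a flat output of the subsystem.

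The main obstacle, and the point requiring the most care, is the bookkeeping of the two distinct roles of $x_2$ under forward-shifting. In the full system $x_2$ is a state, so its forward-shift is $f_2(x_1,x_2,u_1,u_2)$, whereas in the subsystem $x_2$ is an input whose forward-shift $x_{2,[1]}$ is free. The map $\psi$ furnished by the rank condition is exactly what reconciles these: it lets one trade the free input $x_2^{+}$ of the subsystem for the genuine input $u_2$ of the full system and back, so that a flat parametrization on one side transports to a flat parametrization on the other. Ensuring this substitution is consistent across all orders of forward-shifts, rather than the regularity argument for $\partial_{u_2}f_2$ which is routine, is where the argument must be stated with care.
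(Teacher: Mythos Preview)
Your proposal is correct and follows essentially the same route as the paper: both directions hinge on the regularity of $\partial_{u_{2}}f_{2}$ (deduced from $\mathrm{rank}(\partial_{u}f)=m$ and $\partial_{u_{2}}f_{1}=0$) and the resulting ability to trade $u_{2}$ against $x_{2}^{+}$. The only cosmetic difference is that, for the converse direction, the paper first normalizes via the input transformation $\hat{u}_{2}=f_{2}(x_{1},x_{2},u_{1},u_{2})$ so that the second block becomes $x_{2}^{+}=\hat{u}_{2}$ and the substitution $\hat{u}_{2,[\alpha]}\mapsto x_{2,[\alpha+1]}$ is literal, whereas you work directly with the implicit inverse $\psi$; the content is the same.
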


The equations $x_{2}^{+}=f_{2}(x_{1},x_{2},u_{1},u_{2})$ of (\ref{eq:basic_decomposition_flat})
can be interpreted as an endogenous dynamic feedback for the subsystem
(\ref{eq:basic_decomposition_flat_subsys}). This is in accordance
with the fact that applying or removing an endogenous dynamic feedback
has no effect on the flatness of a system. It is important to note
that the Jacobian matrix $\partial_{(x_{2},u_{1})}f_{1}$ does not
necessarily have rank $m$, which means that the subsystem (\ref{eq:basic_decomposition_flat_subsys})
may have redundant inputs. However, redundant inputs can be eliminated
by suitable transformations: For a system (\ref{eq:sys}) with $\mathrm{rank}(\partial_{u}f)=\hat{m}<m$,
there always exists an input transformation $(\hat{u},\tilde{u})=\Phi_{u}(x,u)$
with $\dim(\hat{u})=\hat{m}$ that eliminates $m-\hat{m}$ redundant
inputs $\tilde{u}$. There is a simple connection between a flat output
of the transformed system with $\hat{m}$ inputs, and the original
system (\ref{eq:sys}) with $m$ inputs.
\begin{lem}
\label{lem:flatness_redundant_inputs} (\cite{KolarSchoberlDiwold:2019})
Consider a system (\ref{eq:sys}) with $\mathrm{rank}(\partial_{u}f)=\hat{m}<m$,
and an input transformation $(\hat{u},\tilde{u})=\Phi_{u}(x,u)$ with
$\dim(\hat{u})=\hat{m}$ that eliminates $m-\hat{m}$ redundant inputs
$\tilde{u}$. If an $\hat{m}$-tuple $\hat{y}$ is a flat output of
the transformed system
\[
x^{i,+}=\hat{f}^{i}(x,\hat{u})\,,\quad i=1,\ldots,n
\]
with the $\hat{m}$ inputs $\hat{u}$, then the $m$-tuple $y=(\hat{y},\tilde{u})$
is a flat output of the original system (\ref{eq:sys}) with the $m$
inputs $u$.
\end{lem}

Thus, redundant inputs are candidates for components of a flat output.

\section{\label{sec:Projectable_Distributions}Projectable Vector Fields and
Distributions}

The necessary and sufficient conditions for flatness that we derive
in Section \ref{sec:Necessary_and_Sufficient} are based on the concept
of projectable vector fields and distributions. In the following,
we give a brief overview and refer to \cite{KolarSchoberlDiwold:2019}
or \cite{Boothby:1986} for further details.

We call a vector field
\begin{equation}
v=v_{x}^{i}(x,u)\partial_{x^{i}}+v_{u}^{j}(x,u)\partial_{u^{j}}\label{eq:f-related_v}
\end{equation}
on $\mathcal{X}\times\mathcal{U}$ ``projectable'' with respect
to the map (\ref{eq:map_f}), if a pointwise application of the tangent
map $f_{*}:\mathcal{T}(\mathcal{X}\times\mathcal{U})\rightarrow\mathcal{T}(\mathcal{X}^{+})$
yields a well-defined vector field
\begin{equation}
w=w^{i}(x^{+})\partial_{x^{i,+}}\label{eq:f-related_w}
\end{equation}
on $\mathcal{X}^{+}$. In this case, the components of (\ref{eq:f-related_v})
and (\ref{eq:f-related_w}) meet
\[
w^{i}(x^{+})\circ f(x,u)=\partial_{x^{k}}f^{i}v_{x}^{k}(x,u)+\partial_{u^{j}}f^{i}v_{u}^{j}(x,u)\,,
\]
$i=1,\ldots,n$. The vector fields are said to be $f$-related and
we write $w=f_{*}(v)$. Checking whether a vector field (\ref{eq:f-related_v})
is projectable or not is a simple task if we introduce coordinates
\begin{equation}
\begin{array}{ccl}
\theta^{i} & = & f^{i}(x,u)\,,\quad i=1,\ldots,n\\
\xi^{j} & = & h^{j}(x,u)\,,\quad j=1,\ldots,m
\end{array}\label{eq:adapted_coordinates}
\end{equation}
on $\mathcal{X}\times\mathcal{U}$ which are adapted to the ``fibration''
(foliation) determined by the map (\ref{eq:map_f}). The $m$ functions
$h^{j}(x,u)$ must be chosen such that the Jacobian matrix of the
right-hand side of (\ref{eq:adapted_coordinates}) is regular. Because
of the submersivity condition (\ref{eq:submersivity}), this is always
possible and can be achieved e.g. by choosing suitable components
of $x$ or $u$. With coordinates $(\theta,\xi)$ on $\mathcal{X}\times\mathcal{U}$,
the map (\ref{eq:map_f}) has the simple form
\begin{equation}
x^{i,+}=\theta^{i}\,,\quad i=1,\ldots,n\,.\label{eq:f_pr1}
\end{equation}
All points of $\mathcal{X}\times\mathcal{U}$ with the same value
of $\theta$ belong to the same fibre and are mapped to the same point
of $\mathcal{X}^{+}$, regardless of the value of the fibre coordinates
$\xi$. In adapted coordinates, a vector field (\ref{eq:f-related_v})
on $\mathcal{X}\times\mathcal{U}$ has in general the form
\begin{equation}
v=a^{i}(\theta,\xi)\partial_{\theta^{i}}+b^{j}(\theta,\xi)\partial_{\xi^{j}}\,,\label{eq:f-related_v_adapt}
\end{equation}
and because of (\ref{eq:f_pr1}) an application of the tangent map
$f_{*}$ yields
\begin{equation}
f_{*}(v)=a^{i}(\theta,\xi)\partial_{x^{i,+}}\,.\label{eq:f-related_fstar_v_adapt}
\end{equation}
Obviously, the pointwise pushforward (\ref{eq:f-related_fstar_v_adapt})
of (\ref{eq:f-related_v_adapt}) induces a well-defined vector field
on $\mathcal{X}^{+}$ if and only if the functions $a^{i}$ are independent
of the coordinates $\xi$. In this case, replacing $\theta$ by $x^{+}$
yields the vector field (\ref{eq:f-related_w}). In summary, a vector
field (\ref{eq:f-related_v}) is projectable if and only if in adapted
coordinates (\ref{eq:adapted_coordinates}) it has the form
\begin{equation}
a^{i}(\theta)\partial_{\theta^{i}}+b^{j}(\theta,\xi)\partial_{\xi^{j}}\,,\label{eq:projectable_vector_field_adapt}
\end{equation}
and the corresponding vector field (\ref{eq:f-related_w}) is given
by
\[
a^{i}(x^{+})\partial_{x^{i,+}}\,.
\]

Similar to projectable vector fields, we call a distribution $D$
on $\mathcal{X}\times\mathcal{U}$ ``projectable'' if it admits
a basis that consists of projectable vector fields (not every vector
field contained in a projectable distribution has to be projectable
itself). The existence of such a basis ensures that the pushforward
$f_{*}(D)$ of a projectable distribution is a well-defined distribution
on $\mathcal{X}^{+}$. Moreover, since the Lie brackets $[v_{1},v_{2}]$
and $[w_{1},w_{2}]$ of two pairs $v_{1},w_{1}$ and $v_{2},w_{2}$
of $f$-related vector fields are again $f$-related, i.e. $f_{*}[v_{1},v_{2}]=[w_{1},w_{2}]$,
the pushforward of an involutive projectable distribution is again
an involutive distribution. The following theorem is essential for
the uniqueness of the sequence of distributions introduced in Section
\ref{sec:Necessary_and_Sufficient}.
\begin{thm}
\label{thm:largest_projectable_subdistribution}The largest projectable
subdistribution $D\subset E$ of a distribution $E$ on $\mathcal{X}\times\mathcal{U}$
is uniquely determined. If $E$ is involutive, then $D$ is also involutive.
\end{thm}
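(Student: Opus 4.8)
The plan is to work in the adapted coordinates $(\theta,\xi)$ of \eqref{eq:adapted_coordinates}, in which $f$ becomes the projection $(\theta,\xi)\mapsto\theta$ and, by \eqref{eq:projectable_vector_field_adapt}, a vector field in $D$ is projectable precisely when its $\theta$-components depend on $\theta$ alone; in these coordinates $f_{*}$ simply reads off the $\theta$-part of a vector. First I would construct the candidate for $D_{P}$ fibrewise: for each $q\in\mathcal{X}^{+}$ set
\[
W_{q}=\bigcap_{p\in f^{-1}(q)}f_{*p}(D_{p})\subset\mathcal{T}_{q}(\mathcal{X}^{+})\,,
\]
the intersection of the pushforward images of $D$ taken over the whole fibre, and define $D_{P}$ pointwise by $(D_{P})_{p}=\{v\in D_{p}:f_{*p}(v)\in W_{f(p)}\}$. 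Intuitively, $W_{q}$ collects exactly those tangent vectors at $q$ that are simultaneously reachable as pushforwards of vectors of $D$ at \emph{every} point of the fibre, which is precisely the obstruction to a consistent, well-defined pushforward.

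Next I would show that $D_{P}$ is projectable by exhibiting a projectable basis. The vertical part $D\cap\ker f_{*}$, spanned by fields of the form $b^{j}(\theta,\xi)\partial_{\xi^{j}}$, is contained in $D_{P}$ and is projectable, since each such field pushes forward to the zero field. For the remaining directions I would pick a smooth basis $w_{1}(\theta),\ldots,w_{s}(\theta)$ of $W_{\theta}$ and lift each $w_{k}$ to a vector field $v_{k}\in D$ with $f_{*}(v_{k})=w_{k}(\theta)$; because the prescribed $\theta$-part $w_{k}(\theta)$ is independent of $\xi$, the field $v_{k}$ is projectable by \eqref{eq:projectable_vector_field_adapt}. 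Together these span $D_{P}$, so $D_{P}$ admits a projectable basis.

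To prove maximality and uniqueness, let $D'\subset D$ be any projectable subdistribution with projectable basis $v_{1}',\ldots,v_{r}'$. Each $v_{l}'$ is $f$-related to a well-defined field $w_{l}'$ on $\mathcal{X}^{+}$, so $f_{*p}(v_{l}')=w_{l}'(f(p))$ is the same for all $p$ in a given fibre and, since $v_{l}'\in D$, it lies in $f_{*p}(D_{p})$ for every such $p$; hence $w_{l}'\in W$ and $v_{l}'\in D_{P}$. Thus $D'\subseteq D_{P}$, which shows $D_{P}$ is the largest projectable subdistribution, and being the largest it is unique. For the involutivity statement I would use the fact recalled before the theorem, that $f_{*}[v_{1},v_{2}]=[w_{1},w_{2}]$ for $f$-related pairs: taking $v_{1},v_{2}$ in a projectable basis of $D_{P}$, their bracket is again projectable, and if $D$ is involutive then $[v_{1},v_{2}]\in D$; being a projectable field contained in $D$, its pushforward again lies in $W$, so $[v_{1},v_{2}]\in D_{P}$ and $D_{P}$ is involutive.

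The step I expect to be the main obstacle is the well-definedness of $D_{P}$ as a smooth distribution, i.e.\ establishing that $\dim W_{q}$ and $\dim(D\cap\ker f_{*})$ are locally constant and that a smooth basis of $W_{\theta}$ can be lifted to smooth vector fields in $D$. This is exactly where the local, constant-rank character of the setting enters: both the fibrewise intersection defining $W_{q}$ and the solvability of the linear lifting equations $\sum_{a}c^{a}(\theta,\xi)\,f_{*}(e_{a})=w_{k}(\theta)$ in a smooth basis $e_{1},\ldots,e_{d}$ of $D$ must be handled under a regularity assumption, so that the rank stays constant and smooth selections exist.
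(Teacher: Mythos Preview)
Your argument is correct under the paper's standing constant-rank assumptions, and your treatment of involutivity is essentially identical to the paper's: brackets of projectable basis fields are again projectable, lie in $D$ by involutivity of $D$, and hence lie in $D_{P}$ by maximality.

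The construction of $D_{P}$ itself, however, follows a genuinely different route. The paper works entirely in adapted coordinates and simply says: pick a basis of $D$ containing as many projectable vector fields \eqref{eq:projectable_vector_field_adapt} as possible; these span $D_{P}$, and every projectable field in $D$ is a combination of them. This is computational and short, but the phrase ``as many as possible'' hides the uniqueness argument. Your approach is intrinsic: you define the target distribution $W_{q}=\bigcap_{p\in f^{-1}(q)}f_{*p}(D_{p})$ on $\mathcal{X}^{+}$ first and pull it back to $D_{P}$. This makes maximality and uniqueness transparent---any projectable $D'\subset D$ automatically pushes into $W$---and gives a coordinate-free characterisation of $D_{P}$ that the paper does not state. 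The price is exactly the obstacle you identify: you must argue that $W$ has locally constant rank and that smooth lifts exist, which the paper sidesteps by working with an explicit basis in adapted coordinates from the outset. Both arguments ultimately rely on the same regularity hypothesis; yours is cleaner conceptually, the paper's is closer to how one would actually compute $D_{P}$ in practice.
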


\begin{proof}
Introduce adapted coordinates (\ref{eq:adapted_coordinates}) on $\mathcal{X}\times\mathcal{U}$,
and construct a new basis for $E$ which contains as many projectable
vector fields (\ref{eq:projectable_vector_field_adapt}) as possible.
These projectable vector fields are a basis for the largest projectable
subdistribution $D$. Every other projectable vector field in $E$
can be written as linear combination of these vector fields. If the
distribution $E$ is involutive, then all pairwise Lie brackets of
the basis vector fields of the projectable subdistribution $D\subset E$
must be contained in $E$. However, since the basis vector fields
of $D$ are projectable, the Lie brackets are again projectable vector
fields (see above). Since by construction $D$ contains all projectable
vector fields of $E$, the subdistribution $D$ is involutive itself.
\end{proof}
In adapted coordinates (\ref{eq:adapted_coordinates}), the computation
of the largest projectable subdistribution of a given $d$-dimensional
distribution requires only algebraic manipulations. It is convenient
to introduce in a first step a ``normalized'' basis consisting of
vector fields
\begin{align}
v_{k} & =\partial_{\theta^{k}}+a_{k}^{\bar{d}+1}(\theta,\xi)\partial_{\theta^{\bar{d}+1}}+\ldots+a_{k}^{n}(\theta,\xi)\partial_{\theta^{n}}\label{eq:normalized_basis}\\
 & \hphantom{\phantom{=\partial_{\theta^{1}}}}+b_{k}^{1}(\theta,\xi)\partial_{\xi^{1}}+\ldots+b_{k}^{m}(\theta,\xi)\partial_{\xi^{m}}\,,\nonumber 
\end{align}
$k=1,\ldots,\bar{d}$ and
\begin{equation}
v_{l}=b_{l}^{1}(\theta,\xi)\partial_{\xi^{1}}+\ldots+b_{l}^{m}(\theta,\xi)\partial_{\xi^{m}}\,,\label{eq:normalized_basis_proj_to_0}
\end{equation}
$l=\bar{d}+1,\ldots,d$, where $d-\bar{d}$ is the dimension of the
subdistribution projecting to zero. Obviously, the $d-\bar{d}$ vector
fields (\ref{eq:normalized_basis_proj_to_0}) are already projectable.
As shown in the appendix of \cite{KolarSchoberlDiwold:2019}, a linear
combination
\begin{equation}
c^{1}(\theta,\xi)v_{1}+\ldots+c^{\bar{d}}(\theta,\xi)v_{\bar{d}}\label{eq:projectable_linear_combination}
\end{equation}
of the $\bar{d}$ remaining vector fields (\ref{eq:normalized_basis})
is projectable if and only if the coefficients $c^{1},\ldots,c^{\bar{d}}$
are independent of $\xi$ and meet
\[
\partial_{\xi^{j}}a_{k}^{i}(\theta,\xi)c^{k}(\theta)=0\quad\forall i=\bar{d}+1,\ldots,n\:\text{and}\:j=1,\ldots,m\,.
\]
In other words, a linear combination (\ref{eq:projectable_linear_combination})
is projectable if and only if the column vector of coefficients
\begin{equation}
\begin{bmatrix}c^{1}(\theta) & \cdots & c^{\bar{d}}(\theta)\end{bmatrix}^{T}\label{eq:coefficient_vector}
\end{equation}
lies in the kernel of the $(n-\bar{d})m\times\bar{d}$-matrix
\begin{equation}
\begin{bmatrix}\partial_{\xi^{j}}a_{1}^{i}(\theta,\xi) & \cdots & \partial_{\xi^{j}}a_{\bar{d}}^{i}(\theta,\xi)\end{bmatrix}\label{eq:matrix_kernel}
\end{equation}
with $i=\bar{d}+1,\ldots,n$ and $j=1,\ldots,m$. In contrast to \cite{KolarSchoberlDiwold:2019},
we want to find a maximal set of independent solutions (\ref{eq:coefficient_vector}).
This can be achieved by a successive reduction strategy: If we bring
the matrix (\ref{eq:matrix_kernel}) with row manipulations into the
form
\[
\begin{bmatrix}I & R(\theta,\xi)\\
0 & 0
\end{bmatrix}
\]
with an identity matrix $I$ and some remaining matrix $R(\theta,\xi)$,
(\ref{eq:coefficient_vector}) can only lie in its kernel if the product
of $R(\theta,\xi)$ with the corresponding elements of (\ref{eq:coefficient_vector})
is independent of $\xi$. The other elements of (\ref{eq:coefficient_vector}),
i.e., those that are multiplied with the identity matrix, are determined
by the fact that the sum of the two products must vanish. Obviously,
the product with $R(\theta,\xi)$ is independent of $\xi$ if and
only if the corresponding part of (\ref{eq:coefficient_vector}) lies
in the kernel of the matrix
\begin{equation}
\begin{bmatrix}\partial_{\xi^{1}}R(\theta,\xi)\\
\vdots\\
\partial_{\xi^{m}}R(\theta,\xi)
\end{bmatrix}\,.\label{eq:matrix_kernel_2}
\end{equation}
Thus, we have reduced the original problem with the matrix (\ref{eq:matrix_kernel})
to a smaller one with the matrix (\ref{eq:matrix_kernel_2}). Continuing
the procedure yields a maximal set of independent solutions (\ref{eq:coefficient_vector}).
The corresponding linear combinations (\ref{eq:projectable_linear_combination})
form a basis for the largest projectable subdistribution.

\section{\label{sec:Necessary_and_Sufficient}Necessary and Sufficient Conditions}

Except for special situations as in \cite{MilleriouxJungers:2021}
with switched systems, verifying whether a given output (\ref{eq:flat_output})
is a flat output or not is in principle a straightforward task (with
a restriction to some upper limit for the number of shifts in (\ref{eq:flat_param})).
In the following, we address the significantly more difficult problem
of checking whether there exists a flat output or not.

In \cite{KolarSchoberlDiwold:2019} it has been shown that every flat
discrete-time system satisfies the following necessary condition.
\begin{thm}
\label{thm:necessary_condition_flat}The input distribution $\mathrm{span}\{\partial_{u}\}$
of a flat system (\ref{eq:sys}) with $\mathrm{rank}(\partial_{u}f)=m$
contains a nontrivial projectable vector field.
\end{thm}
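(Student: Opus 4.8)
The plan is to read the required vector field directly off the flat parametrization \eqref{eq:flat_parametrization}. First I would invoke flatness to obtain the submersion $(x,u)=F(y_{[0,R]})$ together with the identity \eqref{eq:sys_identity_y}, rewritten as $F_x^i(y_{[1,R]})=f^i(F_x(y_{[0,R-1]}),F_u(y_{[0,R]}))$, and recall that the highest shifts $y_{[R]}$ enter only through $F_u$. The idea is to differentiate this identity with respect to a highest shift $y^k_{[r_k]}$. Since $F_x$ is independent of $y^k_{[r_k]}$, the chain rule collapses the right-hand side to its input part, giving
\[
\partial_{y^k_{[r_k]}}F_x^i(y_{[1,R]})=\partial_{u^j}f^i\,\partial_{y^k_{[r_k]}}F_u^j\,,\qquad i=1,\ldots,n\,.
\]

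Comparing with the $f$-relatedness condition \eqref{eq:f-related_components}, this suggests the candidate $v=(\partial_{y^k_{[r_k]}}F_u^j)\,\partial_{u^j}$, whose $x$-components vanish, so that $v$ lies in the input distribution $\mathrm{span}\{\partial_u\}$ by construction, with candidate pushforward $w^i=\partial_{y^k_{[r_k]}}F_x^i$. The candidate is nontrivial, because the highest shifts $y_{[R]}$ appear genuinely in $F_u$, so $\partial_{y^k_{[r_k]}}F_u\neq0$ for at least one $k$. Thus the statement would follow once $v$ is upgraded to a \emph{bona fide} projectable vector field in the sense of \eqref{eq:f-related_v}, \eqref{eq:f-related_w}.

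The hard part, and the real content, is exactly this upgrade. A priori $\partial_{y^k_{[r_k]}}F_u^j$ and $\partial_{y^k_{[r_k]}}F_x^i$ are functions on the jet space, and the simple example $x_1^+=u_1,\ x_2^+=u_2,\ x_3^+=x_2u_1$ (flat with $y=(x_1,x_3)$) shows that $\partial_{y^k_{[r_k]}}F_u$ may differ from a genuine element of $\mathrm{span}\{\partial_u\}$ by a factor that does not descend to $\mathcal X\times\mathcal U$. Therefore I would work with the \emph{line} spanned by the candidate rather than the vector itself, and the key step I must establish is that the line $\mathrm{span}\{\partial_{y^k_{[r_k]}}F_x^i\}$ in $\mathcal T\mathcal X^+$ is constant along the fibres of $f$, i.e. depends on $y_{[1,R]}$ only through $x^+=F_x(y_{[1,R]})$. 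This is precisely the discrete analogue of the ruled-manifold condition, and it is where the submersion structure of $F$ enters: the fibres of $f$ are generated by the zeroth-shift directions $\partial_{y^j}$, along which $x$ and $u$ vary while $x^+=F_x(y_{[1,R]})$ stays fixed. Once this line is known to be a function of $x^+$, I choose a nonvanishing section $e(x^+)$ of it and, using that $\mathrm{rank}(\partial_uf)=m$ provides a left inverse $(\partial_uf)^{L}$ depending only on $(x,u)$, I set $v=((\partial_uf)^{L}\,e\circ f)^j\,\partial_{u^j}$. Then $v$ is a nontrivial field in the input distribution, and since $e\circ f$ lies in the column space of $\partial_uf$ by the identity above, one gets $\partial_{u^j}f^i\,v^j=e^i\circ f$, a function of $x^+$; hence $v$ is projectable with pushforward $e^i\partial_{x^{i,+}}$, as required. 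I expect essentially all the difficulty to be concentrated in the fibre-constancy of this line, which is what encodes the geometric obstruction to flatness.
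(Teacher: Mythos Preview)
Your starting point coincides with the paper's: differentiate the identity $F_x^i(y_{[1,R]})=f^i\bigl(F_x(y_{[0,R-1]}),F_u(y_{[0,R]})\bigr)$ with respect to a highest shift $y^k_{[r_k]}$, and you correctly isolate the real obstacle, namely that the resulting candidates $\partial_{y^k_{[r_k]}}F_u$ and $\partial_{y^k_{[r_k]}}F_x$ are functions on the jet space rather than on $\mathcal X\times\mathcal U$ and $\mathcal X^+$. The gap is in your proposed resolution. You reduce everything to the claim that the line $\mathrm{span}\{\partial_{y^k_{[r_k]}}F_x\}$ depends on $y_{[1,R]}$ only through $x^+=F_x(y_{[1,R]})$, but you do not prove this, and your heuristic for it is not sound: the $y_{[0]}$-directions you invoke are irrelevant to a quantity that is already a function of $y_{[1,R]}$ alone; what you would actually need is constancy along the $(\sum_j r_j-n)$-dimensional fibres of the submersion $F_x\colon y_{[1,R]}\mapsto x^+$, and for that you offer no argument. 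So the proof, as it stands, has its entire content deferred to an unproven and nontrivial claim.

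The paper sidesteps this difficulty with a much simpler device that you may have overlooked. Instead of staying in $y$-coordinates, substitute the inverse relation $y_{[\alpha]}=\delta_{xu}^{\alpha}\varphi(x,u,u_{[1]},\ldots)$ into the differentiated identity, so that it becomes an identity in $(x,u,u_{[1]},u_{[2]},\ldots)$. Because the left-hand side was $\delta_y(\partial_{y^k_{[r_k-1]}}F_x^i)$, after substitution it reads $\tilde w^i(f(x,u),u_{[1]},u_{[2]},\ldots)$, while the right-hand side reads $(\partial_{u^j}f^i)\,\tilde v^j(x,u,u_{[1]},\ldots)$. Now simply \emph{freeze} the higher shifts to constants, $u_{[\alpha]}=c_\alpha$ (for instance the equilibrium value $u_0$). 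This produces honest functions $w^i(x^+)=\tilde w^i(x^+,c_1,c_2,\ldots)$ and $v^j(x,u)=\tilde v^j(x,u,c_1,\ldots)$ that satisfy the $f$-relatedness condition \eqref{eq:f-related_components} on the nose, with no fibre-constancy argument required; nontriviality for some $k$ follows from the submersivity of $F$. In short, your ``upgrade'' problem is solved not by descending a line but by evaluating on a slice of the jet space.
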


In other words, the input distribution contains an at least 1-dimensional
projectable subdistribution. Such a projectable vector field or subdistribution
can be used to transform the system into the decomposed form (\ref{eq:basic_decomposition_flat})
of Lemma \ref{lem:basic_decomposition_flat}, where the complete system
is flat if and only if the subsystem (\ref{eq:basic_decomposition_flat_subsys})
is flat. The required input- and state transformations can be constructed
by straightening out the vector field and its pushforward (or the
corresponding distributions) by the flow-box theorem / Frobenius theorem.
As shown in \cite{KolarSchoberlDiwold:2019}, a repeated application
of this decomposition allows to check whether a system (\ref{eq:sys})
is flat or not in at most $n-1$ steps. However, straightening out
vector fields / distributions with the flow-box theorem / Frobenius
theorem requires the solution of ODEs. For this reason, in the following
we introduce a computationally more efficient a priori test, which
allows to check whether the repeated decompositions are possible or
not without actually performing them. This test relies on sequences
of nested distributions on $\mathcal{X}\times\mathcal{U}$ and $\mathcal{X}\text{\textsuperscript{+}}$.
The construction of these sequences of distributions is based on the
map (\ref{eq:map_f}) defined by the system equations (\ref{eq:sys}),
and the map
\[
\pi:\mathcal{X}\times\mathcal{U}\rightarrow\mathcal{X}^{+}
\]
defined by
\[
x^{i,+}=x^{i}\,,\quad i=1,\ldots,n\,.
\]
We assume that the distributions have locally constant dimension.\textbf{\smallskip{}
}\\
\textbf{Algorithm 1}\textbf{\emph{.\smallskip{}
}}\\
\textbf{\emph{Step $0$:}}\emph{ Define the involutive distributions
$\Delta_{0}=0$ on $\mathcal{X}^{+}$ and}\footnote{As in \cite{Grizzle:1986} and \cite{NijmeijervanderSchaft:1990},
$\pi_{*}^{-1}(\Delta)$ denotes the inverse image of a distribution
$\Delta$ under the tangent map $\pi_{*}$. If $\Delta=\mathrm{span}\{v_{1}^{i}(x^{+})\partial_{x^{i,+}},\ldots,v_{d}^{i}(x^{+})\partial_{x^{i,+}}\}$
is a $d$-dimensional distribution on $\mathcal{X}^{+}$, then $\pi_{*}^{-1}(\Delta)$
is the $(d+m)$-dimensional distribution $\mathrm{span}\{v_{1}^{i}(x)\partial_{x^{i}},\ldots,v_{d}^{i}(x)\partial_{x^{i}},\partial_{u^{1}},\ldots,\partial_{u^{m}}\}$
on $\mathcal{X}\times\mathcal{U}$.}\emph{ $E_{0}=\pi_{*}^{-1}(\Delta_{0})=\mathrm{span}\{\partial_{u}\}$
on $\mathcal{X}\times\mathcal{U}$. Then compute the largest subdistribution
$D_{0}\subset E_{0}$ which is projectable with respect to the map
(\ref{eq:map_f}). Because of Theorem \ref{thm:largest_projectable_subdistribution},
$D_{0}$ is unique and involutive. Thus, the pushforward $\Delta_{1}=f_{*}(D_{0})$
is a well-defined involutive distribution on $\mathcal{X}^{+}$.}\textbf{\emph{}}\\
\textbf{\emph{Step $k\geq1$:}}\emph{ Define the involutive distribution
\[
E_{k}=\pi_{*}^{-1}(\Delta_{k})
\]
on $\mathcal{X}\times\mathcal{U}$. Because of $\Delta_{k-1}\subset\Delta_{k}$,
it satisfies $E_{k-1}\subset E_{k}$. Then compute the largest subdistribution
\[
D_{k}\subset E_{k}
\]
which is projectable with respect to the map (\ref{eq:map_f}). Because
of Theorem \ref{thm:largest_projectable_subdistribution}, $D_{k}$
is unique and involutive. Moreover, because of $D_{k-1}\subset E_{k-1}$
and $E_{k-1}\subset E_{k}$, it satisfies $D_{k-1}\subset D_{k}$.
Thus, the pushforward
\begin{equation}
\Delta_{k+1}=f_{*}(D_{k})\label{eq:Delta_fstar_D}
\end{equation}
is a well-defined involutive distribution on $\mathcal{X}^{+}$ with
\begin{equation}
\Delta_{k}\subset\Delta_{k+1}\,.\label{eq:dim_deltak1_dim_deltak}
\end{equation}
}\textbf{\emph{Stop}}\emph{ if $\dim(\Delta_{\bar{k}+1})=\dim(\Delta_{\bar{k}})$
for some $k=\bar{k}$.}
\begin{rem}
The distributions $\Delta_{k}$ are involutive since the pushforward
of a projectable and involutive distribution $D_{k-1}$ is again an
involutive distribution. Subsequently, the involutivity of $\Delta_{k}$
implies the involutivity of $E_{k}$. For $k=0$ with $E_{0}=\mathrm{span}\{\partial_{u}\}$,
this is obvious. For $k\geq1$ we know that $\Delta_{k}$ is involutive,
and can perform a state transformation $(\tilde{x}_{1},\tilde{x}_{2})=\Phi_{x}(x)$
with $\dim(\tilde{x}_{1})=\dim(\Delta_{k})$ such that $\Delta_{k}=\mathrm{span}\{\partial_{\tilde{x}_{1}^{+}}\}$.
In these coordinates, $E_{k}=\mathrm{span}\{\partial_{\tilde{x}_{1}},\partial_{u}\}$
is straightened out and hence clearly involutive.
\end{rem}

Because of (\ref{eq:dim_deltak1_dim_deltak}) and $\dim(\mathcal{X}^{+})=n$,
the procedure terminates after at most $n$ steps. It yields a unique
nested sequence of projectable and involutive distributions
\begin{equation}
D_{0}\subset D_{1}\subset\ldots\subset D_{\bar{k}-1}\label{eq:chain_D}
\end{equation}
on $\mathcal{X}\times\mathcal{U}$, and a unique nested sequence of
involutive distributions
\begin{equation}
\Delta_{1}\subset\Delta_{2}\subset\ldots\subset\Delta_{\bar{k}}\label{eq:chain_Delta}
\end{equation}
on $\mathcal{X}^{+}$, which are related by the condition (\ref{eq:Delta_fstar_D}).
Since we have assumed that locally all these distributions have constant
dimension, we can define
\[
\rho_{k}=\dim(\Delta_{k})-\dim(\Delta_{k-1})\,,\quad k\geq1
\]
with $\dim(\Delta_{0})=0$. Since the pushforward of linearly independent,
projectable vector fields on $\mathcal{X}\times\mathcal{U}$ does
not necessarily yield linearly independent vector fields on $\mathcal{X}^{+}$,
it is also important to note that in general
\[
\dim(\Delta_{k})\leq\dim(D_{k-1})\,.
\]
Therefore, we additionally define
\begin{equation}
\mu_{k}=\dim(D_{k})-\dim(\Delta_{k+1})-\underbrace{\left(\dim(D_{k-1})-\dim(\Delta_{k})\right)}_{\mu_{0}+\ldots+\mu_{k-1}}\,,\label{eq:mu_k}
\end{equation}
$k\geq1$, which is just the number of linearly independent vector
fields $v\in D_{k}$ with $f_{*}(v)=0$ that are not contained in
$D_{k-1}$. For $k=0$, in the case $\mathrm{rank}(\partial_{u}f)=m$
we always have
\[
\mu_{0}=\dim(D_{0})-\dim(\Delta_{1})=0\,.
\]

The sequence (\ref{eq:chain_Delta}) generalizes a sequence which
was introduced in \cite{Grizzle:1986} to check whether a discrete-time
system (\ref{eq:sys}) is static feedback linearizable or not.
\begin{thm}
A system (\ref{eq:sys}) with $\mathrm{rank}(\partial_{u}f)=m$ is
static feedback linearizable if and only if $D_{k}=E_{k}$, $k\geq0$
and $\dim(\Delta_{\bar{k}})=n$.
\end{thm}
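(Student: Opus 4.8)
The plan is to exploit that every object in Algorithm 1 is defined purely geometrically through the maps $f$ and $\pi$, and is therefore invariant under the class of transformations defining static feedback equivalence, namely state diffeomorphisms $\bar x=\Phi_x(x)$ together with a regular static feedback $u=\alpha(x,v)$. Such a transformation induces a diffeomorphism of $\mathcal{X}\times\mathcal{U}$ and a state diffeomorphism of $\mathcal{X}^+$ that intertwine both $f$ and $\pi$, and it preserves the input distribution $\mathrm{span}\{\partial_u\}=\mathrm{span}\{\partial_v\}$. Hence it commutes with the preimage $\pi_*^{-1}$, with the pushforward $f_*$, and, by Theorem \ref{thm:largest_projectable_subdistribution}, with the formation of the largest projectable subdistribution. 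Consequently the whole sequence $D_k\subset E_k$ and $\Delta_k$ --- and in particular the properties $D_k=E_k$ and the value of $\dim(\Delta_{\bar k})$ --- are invariants of the static feedback equivalence class of (\ref{eq:sys}).

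For the necessity direction I would use that a static feedback linearizable system is by definition equivalent, in this sense, to a linear controllable system in Brunovský form with controllability indices $\kappa_1,\ldots,\kappa_m$. By the invariance just described it suffices to evaluate Algorithm 1 on this normal form, which is a direct computation: at every step each basis field of $E_k=\pi_*^{-1}(\Delta_k)$ shifts under $f$ to a single coordinate field or to zero, so $E_k$ is projectable and $D_k=E_k$, while $\Delta_{k+1}=f_*(E_k)$ simply adjoins one further stage of each Brunovský chain. After $\bar k=\max_j\kappa_j$ steps the flag exhausts the state space, giving $\dim(\Delta_{\bar k})=\sum_j\kappa_j=n$, where the last equality is exactly controllability of the linear model.

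For sufficiency I would first observe that each $E_k$ is by construction involutive and contains the full input direction $\mathrm{span}\{\partial_u\}$, so the hypothesis $D_k=E_k$ says precisely that every $E_k$ is projectable and hence that each $\Delta_{k+1}=f_*(E_k)$ is defined without any shrinking. The resulting flag coincides with the one underlying the test of \cite{Grizzle:1986}, and together with $\dim(\Delta_{\bar k})=n$ reproduces his integrability and rank conditions, which are known to be necessary and sufficient for static feedback linearizability. To keep the argument self-contained I would straighten out the nested involutive flag $\Delta_1\subset\cdots\subset\Delta_{\bar k}$, with $\dim(\Delta_{\bar k})=n$, on the state space by the Frobenius theorem to obtain adapted state coordinates; the jumps $\rho_k=\dim(\Delta_k)-\dim(\Delta_{k-1})$ then encode the controllability indices, and a maximal set of functions completing these coordinates serves as linearizing output. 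Since $E_k=\pi_*^{-1}(\Delta_k)$ contains $\mathrm{span}\{\partial_u\}$ and equals $D_k$, this output can be chosen to depend on $x$ alone, so that a regular static feedback suffices to bring (\ref{eq:sys}) into Brunovský form.

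I expect the sufficiency direction to be the main obstacle, and within it the decisive point is to show that the equality $D_k=E_k$ genuinely forces a static construction --- that the linearizing output is a function of $x$ only and that no dynamic extension of the state is needed. This is exactly the feature separating static feedback linearization from general difference flatness: in the flat case the largest projectable subdistribution $D_k$ may be a proper subdistribution of $E_k$, and the decomposition of Lemma \ref{lem:basic_decomposition_flat} then peels off a genuine endogenous dynamic feedback, whereas $D_k=E_k$ at every step guarantees that these dynamic directions are absent.
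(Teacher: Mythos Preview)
The paper does not give its own proof of this theorem; it simply writes ``For a proof, see \cite{Grizzle:1986} and \cite{NijmeijervanderSchaft:1990}.'' Your outline is correct and follows precisely the classical route of those references: invariance of the sequence under state transformation and regular static feedback, verification on the Brunovsk\'y normal form for necessity, and straightening the flag $\Delta_1\subset\cdots\subset\Delta_{\bar k}$ by Frobenius for sufficiency. In fact, the paper's discussion immediately after the theorem (showing that $D_k=E_k$ forces the triangular form (\ref{eq:triangular_form_static_feedback_lin}) after the state transformation (\ref{eq:state_transformation_Delta})) is exactly your sufficiency argument in condensed form.

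One point you leave implicit and should make explicit in a full write-up: after obtaining the triangular form (\ref{eq:triangular_form_static_feedback_lin}), one still needs the rank conditions $\mathrm{rank}(\partial_{\bar x_{k-1}}f_k)=\rho_{k-1}$ (and $\mathrm{rank}(\partial_{u}f_1)=m$) so that a regular static feedback can actually bring the system to Brunovsk\'y form. These follow from $f_*(E_{k-1})=\Delta_k$ together with $\mathrm{rank}(\partial_u f)=m$, but you should state this rather than bundle it into the reference to \cite{Grizzle:1986}.
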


Hence, for static feedback linearizability, in every step of Algorithm
1 the complete distribution $E_{k}$ must be projectable. For a proof
see \cite{Grizzle:1986} or \cite{NijmeijervanderSchaft:1990}. If
we drop the condition $D_{k}=E_{k}$, we get necessary and sufficient
conditions for flatness.
\begin{thm}
\label{thm:necessary_and_sufficient_conditions}A system (\ref{eq:sys})
with $\mathrm{rank}(\partial_{u}f)=m$ is flat if and only if $\dim(\Delta_{\bar{k}})=n$.
\end{thm}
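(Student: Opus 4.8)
The plan is to prove both implications simultaneously by induction on the state dimension $n$, showing that Algorithm 1 carries out the repeated decomposition of Lemma \ref{lem:basic_decomposition_flat} symbolically: it records the dimensions that the successive decompositions would produce, without performing the underlying coordinate transformations. Geometrically, $\Delta_k$ collects those directions of $\mathcal{X}^+$ that the decompositions performed so far have turned into inputs, and the relation $E_k=\pi_*^{-1}(\Delta_k)$ re-injects exactly these directions into the input distribution available for the next step. Reaching $\dim(\Delta_{\bar k})=n$ then means that the entire state has been absorbed, so that the decompositions terminate in a subsystem with empty state, which is trivially flat.

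First I would set up the single induction step. Because $\mathrm{rank}(\partial_u f)=m$, the map $f_*$ is injective on $E_0=\mathrm{span}\{\partial_u\}$, hence $\dim(\Delta_1)=\dim(D_0)$, and $D_0\neq0$ forces $\rho_1=\dim(\Delta_1)\geq1$. The distribution $D_0$ is involutive by Theorem \ref{thm:largest_projectable_subdistribution}, and its pushforward $\Delta_1$ is involutive as well, so the Frobenius theorem yields input- and state transformations that straighten both out and bring (\ref{eq:sys}) into the decomposed form (\ref{eq:basic_decomposition_flat}) with $m_2=\rho_1$. The key technical claim is then that, in these coordinates, the distributions $\Delta_2,\ldots,\Delta_{\bar k}$ and $D_1,\ldots,D_{\bar k-1}$ generated by Algorithm 1 for the full system restrict to the sequences produced by Algorithm 1 for the subsystem (\ref{eq:basic_decomposition_flat_subsys}) with inputs $(x_2,u_1)$, after its redundant inputs have been eliminated according to Lemma \ref{lem:flatness_redundant_inputs}. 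In particular, $\dim(\Delta_{\bar k})=n$ holds for the full system if and only if the subsystem's algorithm reaches the full dimension $n-\rho_1$ of its state space.

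Granting this correspondence, both directions follow by induction on $n$, the empty-state case being trivial. For necessity, if (\ref{eq:sys}) is flat then its input distribution contains a nontrivial projectable vector field by Theorem \ref{thm:necessary_condition_flat}, so $D_0\neq0$ and $\rho_1\geq1$; the subsystem (\ref{eq:basic_decomposition_flat_subsys}) is flat by Lemma \ref{lem:basic_decomposition_flat}, its algorithm reaches dimension $n-\rho_1$ by the induction hypothesis, and the correspondence lifts this to $\dim(\Delta_{\bar k})=n$. For sufficiency, if $\dim(\Delta_{\bar k})=n$ then, since $\Delta_0=0$ and the sequence (\ref{eq:chain_Delta}) increases strictly until it stalls, we again have $\rho_1\geq1$ and $D_0\neq0$; hence the decomposition (\ref{eq:basic_decomposition_flat}) exists, the correspondence shows that the subsystem's algorithm reaches $n-\rho_1$, the subsystem is flat by the induction hypothesis, and (\ref{eq:sys}) is flat by Lemma \ref{lem:basic_decomposition_flat}.

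The main obstacle is the correspondence asserted in the second paragraph. The difficulty is that $f$-projectability differs from projectability with respect to the subsystem map $f_1$: the state components absorbed by the first decomposition reappear in $E_k=\pi_*^{-1}(\Delta_k)$ as additional input directions, and these extra directions can render vector fields $f$-projectable that would not be projectable for the subsystem alone. One must therefore show that the pushforwards constructed by the full algorithm grow beyond $\Delta_1$ only by directions that correspond to the subsystem's pushforwards, and that the bookkeeping of the vertical directions counted by $\mu_k$ in (\ref{eq:mu_k}) is consistent with the elimination of redundant inputs via Lemma \ref{lem:flatness_redundant_inputs} at every stage. Verifying that the uniquely determined sequences (\ref{eq:chain_D}) and (\ref{eq:chain_Delta}) are invariant under the straightening-out coordinate changes and restrict cleanly to the subsystem is where essentially all of the work lies.
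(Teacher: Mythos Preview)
Your outline is correct in spirit and rests on the same two pillars as the paper (Lemma \ref{lem:basic_decomposition_flat} and Theorem \ref{thm:necessary_condition_flat}), but the overall architecture differs, and the place where you stop is exactly where the paper invests most of its effort.

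The paper does \emph{not} proceed by induction on $n$ via a single decomposition followed by a correspondence between Algorithm 1 for the full system and Algorithm 1 for the subsystem. Instead it works globally: first it straightens out the entire chain (\ref{eq:chain_Delta}) by one state transformation (\ref{eq:state_transformation_Delta}), which reduces the problem to the representation (\ref{eq:sys_straightened_out_Delta}). It then establishes the rank formula (\ref{eq:jacobian_matrix_subsys_rank}) for the Jacobian of each intermediate subsystem (\ref{eq:sys_straightened_out_Delta_subsys}). With this rank information in hand, the proof straightens out $D_0,D_1,\ldots,D_{\bar k-1}$ successively by input transformations of the subsystems, relying on an involutivity argument (the ``identity-block'' basis for $D_k$, which forces the remaining coefficients to be independent of the previously introduced variables $\hat z_{0},y_1,\hat z_1,\ldots$). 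This yields the explicit form (\ref{eq:sys_straightened_out_D}), from which repeated application of Lemma \ref{lem:basic_decomposition_flat} is immediate. Sufficiency then follows because $\dim(\bar x_{rest})=0$, and necessity by contradiction: if $\dim(\bar x_{rest})>0$, a hypothetical projectable vector field for (\ref{eq:subsys_f_rest_elim}) would enlarge $\Delta_{\bar k}$, violating the stopping criterion.

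Your inductive route would ultimately prove the same thing, but the ``correspondence'' you isolate in your last paragraph is not a minor bookkeeping matter. You would have to show that, after straightening out $D_0$ and $\Delta_1$ and eliminating the $\mu_1$ redundant inputs of the subsystem, the subsystem's own largest projectable subdistribution and its pushforward coincide with the restrictions of $D_1$ and $\Delta_2$. The delicate point is precisely the one you name: $f$-projectability involves the full map $(x_1,x_2,u)\mapsto(x_1^+,x_2^+)$, whereas $f_1$-projectability concerns only $(x_1,x_2,u_1)\mapsto x_1^+$, and the extra $\partial_{x_2}$- and $\partial_{u_2}$-directions present in $E_1$ have to be shown to contribute exactly the redundant-input directions and nothing more. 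The paper sidesteps this comparison entirely by never invoking the subsystem's algorithm; its rank lemma and the identity-block basis argument play the role that your correspondence lemma would have to play. If you wish to complete your version, that correspondence is what you must prove, and it is roughly as much work as the paper's Section \ref{sec:Necessary_and_Sufficient} proof; what you gain is a cleaner inductive statement, what you lose is the explicit construction of the flat output (\ref{eq:constructed_flat_output}) and of the transformation (\ref{eq:coordinate_transformation_D_combined}) that the paper re-uses in Section \ref{sec:Implicit_Triangular_Form}.
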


Before we prove Theorem \ref{thm:necessary_and_sufficient_conditions},
we establish some further properties of the sequences (\ref{eq:chain_D})
and (\ref{eq:chain_Delta}). The basic idea of the proof, however,
is to use the distributions (\ref{eq:chain_D}) and (\ref{eq:chain_Delta})
for a stepwise decomposition of the system (\ref{eq:sys}) into subsystems
and endogenous dynamic feedbacks exactly like in \cite{KolarSchoberlDiwold:2019}.
In the case $\dim(\Delta_{\bar{k}})=n$, the system can be decomposed
until only the trivial system is left, which proves that the system
is flat. In the case $\dim(\Delta_{\bar{k}})<n$, in contrast, there
occurs a subsystem which allows no further decomposition. This is,
however, a contradiction to the necessary condition for flatness derived
in \cite{KolarSchoberlDiwold:2019}.

First, it is important to note that the nested sequence of involutive
distributions (\ref{eq:chain_Delta}) on $\mathcal{X}^{+}$ can be
straightened out by a state transformation
\begin{equation}
(\bar{x}_{1},\ldots,\bar{x}_{\bar{k}},\bar{x}_{rest})=\Phi_{x}(x)\label{eq:state_transformation_Delta}
\end{equation}
with $\dim(\bar{x}_{k})=\rho_{k}$, $k=1,\ldots,\bar{k}$ such that
\[
\begin{array}{ccl}
\Delta_{1} & = & \mathrm{span}\{\partial_{\bar{x}_{1}^{+}}\}\\
\Delta_{2} & = & \mathrm{span}\{\partial_{\bar{x}_{1}^{+}},\partial_{\bar{x}_{2}^{+}}\}\\
 & \vdots\\
\Delta_{\bar{k}} & = & \mathrm{span}\{\partial_{\bar{x}_{1}^{+}},\partial_{\bar{x}_{2}^{+}},\ldots,\partial_{\bar{x}_{\bar{k}}^{+}}\}\,.
\end{array}
\]
In accordance with the transformation law for discrete-time systems,
the state transformation (\ref{eq:state_transformation_Delta}) is
performed both for the variables $x$ and the shifted variables $x^{+}$.
Because of (\ref{eq:Delta_fstar_D}), the transformed system
\begin{equation}
\begin{array}{ccl}
\bar{x}_{rest}^{+} & = & f_{rest}(\bar{x},u)\\
\bar{x}_{\bar{k}}^{+} & = & f_{\bar{k}}(\bar{x},u)\\
 & \vdots\\
\bar{x}_{2}^{+} & = & f_{2}(\bar{x},u)\\
\bar{x}_{1}^{+} & = & f_{1}(\bar{x},u)
\end{array}\label{eq:sys_straightened_out_Delta}
\end{equation}
meets
\begin{equation}
\begin{array}{ccl}
f_{*}(D_{0}) & = & \mathrm{span}\{\partial_{\bar{x}_{1}^{+}}\}\\
f_{*}(D_{1}) & = & \mathrm{span}\{\partial_{\bar{x}_{1}^{+}},\partial_{\bar{x}_{2}^{+}}\}\\
 & \vdots\\
f_{*}(D_{\bar{k}-1}) & = & \mathrm{span}\{\partial_{\bar{x}_{1}^{+}},\partial_{\bar{x}_{2}^{+}},\ldots,\partial_{\bar{x}_{\bar{k}}^{+}}\}\,.
\end{array}\label{eq:pushforwards_chain_D}
\end{equation}
In these coordinates, the involutive distributions
\[
E_{k}=\pi_{*}^{-1}(\Delta_{k})=\mathrm{span}\{\partial_{\bar{x}_{k}},\ldots,\partial_{\bar{x}_{1}},\partial_{u}\}\,,
\]
$k=0,\ldots,\bar{k}-1$ are exactly the input distributions of the
subsystems
\begin{equation}
\begin{array}{ccl}
\bar{x}_{rest}^{+} & = & f_{rest}(\bar{x}_{rest},\bar{x}_{\bar{k}},\ldots,\bar{x}_{1},u)\\
\bar{x}_{\bar{k}}^{+} & = & f_{\bar{k}}(\bar{x}_{rest},\bar{x}_{\bar{k}},\ldots,\bar{x}_{1},u)\\
 & \vdots\\
\bar{x}_{k+1}^{+} & = & f_{k+1}(\bar{x}_{rest},\bar{x}_{\bar{k}},\ldots,\bar{x}_{1},u)
\end{array}\label{eq:sys_straightened_out_Delta_subsys}
\end{equation}
of (\ref{eq:sys_straightened_out_Delta}) without the equations for
$(\bar{x}_{k},\ldots,\bar{x}_{1})$. Among the inputs $(\bar{x}_{k},\ldots,\bar{x}_{1},u)$
of these subsystems there are of course redundant inputs.
\begin{lem}
\label{lem:jacobian_matrix_subsys_rank}The rank of the Jacobian matrix
\begin{equation}
\left[\begin{array}{cccc}
\partial_{\bar{x}_{k}}f_{rest} & \cdots & \partial_{\bar{x}_{1}}f_{rest} & \partial_{u}f_{rest}\\
\partial_{\bar{x}_{k}}f_{\bar{k}} & \cdots & \partial_{\bar{x}_{1}}f_{\bar{k}} & \partial_{u}f_{\bar{k}}\\
\vdots &  & \vdots & \vdots\\
\partial_{\bar{x}_{k}}f_{k+1} & \cdots & \partial_{\bar{x}_{1}}f_{k+1} & \partial_{u}f_{k+1}
\end{array}\right]\label{eq:jacobian_matrix_subsys}
\end{equation}
of the subsystem (\ref{eq:sys_straightened_out_Delta_subsys}) with
respect to its inputs $(\bar{x}_{k},\ldots,\bar{x}_{1},u)$ is given
by
\begin{equation}
m-\left(\dim(D_{k})-\dim(\Delta_{k+1})\right)=m-(\mu_{0}+\ldots+\mu_{k})\,.\label{eq:jacobian_matrix_subsys_rank}
\end{equation}
\end{lem}

\begin{proof}
The Jacobian matrix (\ref{eq:jacobian_matrix_subsys}) has
\[
\rho_{k}+\ldots+\rho_{1}+m=\dim(\Delta_{k})+m
\]
columns. To prove the lemma, we simply calculate the dimension of
its kernel. For the distribution $D_{k}\subset E_{k}$ with $f_{*}(D_{k})=\Delta_{k+1}$,
there exists a basis that contains $\dim(D_{k})-\rho_{k+1}$ vector
fields with a pushforward that lies in $\Delta_{k}\subset\Delta_{k+1}$.
Written as column vectors, these $\dim(D_{k})-\rho_{k+1}$ vector
fields lie in the kernel of the Jacobian matrix (\ref{eq:jacobian_matrix_subsys}).
Thus, the matrix has a kernel of dimension at least $\dim(D_{k})-\rho_{k+1}$.
However, every vector field $v\in E_{k}$ which lies in the kernel
of (\ref{eq:jacobian_matrix_subsys}) has a pushforward that lies
in $\Delta_{k}$, and because of $f_{*}(D_{k-1})=\Delta_{k}$ it is
certainly contained in $D_{k}$. Thus, the dimension of the kernel
is exactly $\dim(D_{k})-\rho_{k+1}$. Subtracting the dimension of
the kernel from the number of columns gives the rank
\[
\dim(\Delta_{k})+m-\left(\dim(D_{k})-\rho_{k+1}\right)\,.
\]
With $\rho_{k+1}=\dim(\Delta_{k+1})-\dim(\Delta_{k})$ and the definition
(\ref{eq:mu_k}) of the integers $\mu_{k}$, the result (\ref{eq:jacobian_matrix_subsys_rank})
follows.
\end{proof}
If the system (\ref{eq:sys}) is static feedback linearizable, then
the state transformation (\ref{eq:state_transformation_Delta}) transforms
the system into a triangular form
\begin{equation}
\begin{array}{ccl}
\bar{x}_{\bar{k}}^{+} & = & f_{\bar{k}}(\bar{x}_{\bar{k}},\bar{x}_{\bar{k}-1})\\
 & \vdots\\
\bar{x}_{2}^{+} & = & f_{2}(\bar{x}_{\bar{k}},\bar{x}_{\bar{k}-1},\ldots,\bar{x}_{1})\\
\bar{x}_{1}^{+} & = & f_{1}(\bar{x}_{\bar{k}},\bar{x}_{\bar{k}-1},\ldots,\bar{x}_{1},u)\,,
\end{array}\label{eq:triangular_form_static_feedback_lin}
\end{equation}
see also \cite{Grizzle:1986} and \cite{NijmeijervanderSchaft:1990}.
The reason is that with $D_{k}=E_{k}$, $k\geq0$ straightening out
the sequence (\ref{eq:chain_Delta}) simultaneously straightens out
the sequence $E_{0}\subset E_{1}\subset\ldots\subset E_{\bar{k}-1}$.
Consequently, $D_{k}=\mathrm{span}\{\partial_{\bar{x}_{k}},\ldots,\partial_{\bar{x}_{1}},\partial_{u}\}$,
$k=0,\ldots,\bar{k}-1$. Evaluating the condition (\ref{eq:pushforwards_chain_D})
shows that the transformed system (\ref{eq:sys_straightened_out_Delta})
with $\dim(\bar{x}_{rest})=0$ must have the triangular form (\ref{eq:triangular_form_static_feedback_lin}).
However, if $D_{k}=E_{k}$ does not hold for all $k=0,\ldots,\bar{k}-1$,
then the state transformation (\ref{eq:state_transformation_Delta})
that straightens out the sequence (\ref{eq:chain_Delta}) does in
general not straighten out the sequence (\ref{eq:chain_D}).

\subsection{Proof of Theorem \ref{thm:necessary_and_sufficient_conditions}}

In the following, we prove Theorem \ref{thm:necessary_and_sufficient_conditions}
for the system (\ref{eq:sys_straightened_out_Delta}) after the state
transformation (\ref{eq:state_transformation_Delta}), i.e., we assume
that the distributions (\ref{eq:chain_Delta}) have already been straightened
out. The idea of the proof is to straighten out the sequence (\ref{eq:chain_D})
in $\bar{k}$ steps with coordinate transformations on $\mathcal{X}\times\mathcal{U}$
that can be interpreted as input transformations for the subsystems
(\ref{eq:sys_straightened_out_Delta_subsys}), $k=0,\ldots,\bar{k}-1$.
These transformations result in a sequence of decomposed subsystems,
and show inductively that the complete system (\ref{eq:sys_straightened_out_Delta})
is flat if and only if the subsystem with $f_{rest}$ is flat.

First, we decompose the system (\ref{eq:sys_straightened_out_Delta})
by straightening out $D_{0}$. Because of $D_{0}\subset E_{0}=\mathrm{span}\{\partial_{u}\}$,
there exists an input transformation
\[
(\eta_{0},\hat{z}_{0})=\Phi_{0}(\bar{x},u)
\]
with inverse $u=\hat{\Phi}_{0}(\bar{x},\eta_{0},\hat{z}_{0})$ such
that
\[
D_{0}=\mathrm{span}\{\partial_{\hat{z}_{0}}\}\,.
\]
Because of $f_{*}(D_{0})=\mathrm{span}\{\partial_{\bar{x}_{1}^{+}}\}$,
in these coordinates the functions $f_{2},\ldots,f_{\bar{k}},f_{rest}$
are independent of $\hat{z}_{0}$, i.e.,
\[
\begin{array}{ccl}
\bar{x}_{rest}^{+} & = & f_{rest}(\bar{x}_{rest},\bar{x}_{\bar{k}},\ldots,\bar{x}_{2},\bar{x}_{1},\eta_{0})\\
\bar{x}_{\bar{k}}^{+} & = & f_{\bar{k}}(\bar{x}_{rest},\bar{x}_{\bar{k}},\ldots,\bar{x}_{2},\bar{x}_{1},\eta_{0})\\
 & \vdots\\
\bar{x}_{2}^{+} & = & f_{2}(\bar{x}_{rest},\bar{x}_{\bar{k}},\ldots,\bar{x}_{2},\bar{x}_{1},\eta_{0})\\
\bar{x}_{1}^{+} & = & f_{1}(\bar{x}_{rest},\bar{x}_{\bar{k}},\ldots,\bar{x}_{2},\bar{x}_{1},\eta_{0},\hat{z}_{0})\,.
\end{array}
\]
Since the system (\ref{eq:sys_straightened_out_Delta}) does not have
redundant inputs,
\[
\mathrm{rank}(\partial_{\hat{z}_{0}}f_{1})=\dim(\hat{z}_{0})=\rho_{1}
\]
holds. Thus, the system is flat if and only if the subsystem $f_{2},\ldots,f_{\bar{k}},f_{rest}$
with the inputs $(\bar{x}_{1},\eta_{0})$ is flat (cf. Lemma \ref{lem:basic_decomposition_flat}).
Next, because of the rank condition of Lemma \ref{lem:jacobian_matrix_subsys_rank},
we can eliminate $\mu_{1}$ redundant inputs $y_{1}$ of this subsystem
by an input transformation
\[
(\zeta_{1},y_{1})=\Psi_{1}(\bar{x}_{rest},\bar{x}_{\bar{k}},\ldots,\bar{x}_{2},\bar{x}_{1},\eta_{0})
\]
with inverse $(\bar{x}_{1},\eta_{0})=\hat{\Psi}_{1}(\bar{x}_{rest},\bar{x}_{\bar{k}},\ldots,\bar{x}_{2},\zeta_{1},y_{1})$.
This yields
\[
\begin{array}{ccl}
\bar{x}_{rest}^{+} & = & f_{rest}(\bar{x}_{rest},\bar{x}_{\bar{k}},\ldots,\bar{x}_{2},\zeta_{1})\\
\bar{x}_{\bar{k}}^{+} & = & f_{\bar{k}}(\bar{x}_{rest},\bar{x}_{\bar{k}},\ldots,\bar{x}_{2},\zeta_{1})\\
 & \vdots\\
\bar{x}_{2}^{+} & = & f_{2}(\bar{x}_{rest},\bar{x}_{\bar{k}},\ldots,\bar{x}_{2},\zeta_{1})\\
\bar{x}_{1}^{+} & = & f_{1}(\bar{x}_{rest},\bar{x}_{\bar{k}},\ldots,\bar{x}_{2},\zeta_{1},y_{1},\hat{z}_{0})
\end{array}
\]
with $\dim(\zeta_{1})=m-(\dim(D_{1})-\dim(\Delta_{2}))$ according
to (\ref{eq:jacobian_matrix_subsys_rank}). Now that all redundant
inputs are eliminated, we decompose the subsystem $f_{2},\ldots,f_{\bar{k}},f_{rest}$
by straightening out
\[
D_{1}\subset E_{1}=\mathrm{span}\{\partial_{\hat{z}_{0}},\partial_{y_{1}},\partial_{\zeta_{1}}\}\,.
\]
Because of $D_{0}\subset D_{1}$ and $f_{*}(\partial_{y_{1}})\subset\mathrm{span}\{\partial_{\bar{x}_{1}^{+}}\}$
we have
\[
\mathrm{span}\{\partial_{\hat{z}_{0}},\partial_{y_{1}}\}\subset D_{1}\,,
\]
and consequently $D_{1}$ has a basis of the form
\begin{align*}
\partial_{\hat{z}_{0}^{i_{0}}}\,, & \quad i_{0}=1,\ldots,\rho_{1}\\
\partial_{y_{1}^{j_{1}}}\,, & \quad j_{1}=1,\ldots,\mu_{1}\\
\sum_{l=1}^{\dim(\zeta_{1})}v_{i_{1}}^{l}(\bar{x}_{rest},\ldots,\bar{x}_{2},\zeta_{1},y_{1},\hat{z}_{0})\partial_{\zeta_{1}^{l}}\,, & \quad i_{1}=1,\ldots,\rho_{2}\,.
\end{align*}
Up to a renumbering of the components of $\zeta_{1}$, there even
exists a basis
\begin{align*}
\partial_{\hat{z}_{0}^{i_{0}}}\,, & \quad i_{0}=1,\ldots,\rho_{1}\\
\partial_{y_{1}^{j_{1}}}\,, & \quad j_{1}=1,\ldots,\mu_{1}\\
\partial_{\zeta_{1}^{i_{1}}}+\sum_{l=\rho_{2}+1}^{\dim(\zeta_{1})}\hat{v}_{i_{1}}^{l}(\bar{x}_{rest},\ldots,\bar{x}_{2},\zeta_{1})\partial_{\zeta_{1}^{l}}\,, & \quad i_{1}=1,\ldots,\rho_{2}\,,
\end{align*}
which, written in matrix form, contains a block with an identity matrix.
Thus, the involutivity of $D_{1}$ implies that all pairwise Lie brackets
of the vector fields vanish. This in turn implies that the coefficients
of the last $\rho_{2}$ vector fields are independent of $\hat{z}_{0}$
and $y_{1}$. Therefore, these vector fields can be straightened out
by a transformation of the form
\[
(\eta_{1},\hat{z}_{1})=\Phi_{1}(\bar{x}_{rest},\bar{x}_{\bar{k}},\ldots,\bar{x}_{2},\zeta_{1})
\]
with inverse $\zeta_{1}=\hat{\Phi}_{1}(\bar{x}_{rest},\bar{x}_{\bar{k}},\ldots,\bar{x}_{2},\eta_{1},\hat{z}_{1})$,
which can be interpreted as an input transformation for the subsystem
$f_{2},\ldots,f_{\bar{k}},f_{rest}$. In new coordinates we have
\[
D_{1}=\mathrm{span}\{\partial_{\hat{z}_{0}},\partial_{y_{1}},\partial_{\hat{z}_{1}}\}\,,
\]
and because of $f_{*}(D_{1})=\mathrm{span}\{\partial_{\bar{x}_{1}^{+}},\partial_{\bar{x}_{2}^{+}}\}$
the functions $f_{3},\ldots,f_{\bar{k}},f_{rest}$ are independent
of $\hat{z}_{0}$, $y_{1}$, and $\hat{z}_{1}$. Thus,
\[
\begin{array}{ccl}
\bar{x}_{rest}^{+} & = & f_{rest}(\bar{x}_{rest},\bar{x}_{\bar{k}},\ldots,\bar{x}_{2},\eta_{1})\\
\bar{x}_{\bar{k}}^{+} & = & f_{\bar{k}}(\bar{x}_{rest},\bar{x}_{\bar{k}},\ldots,\bar{x}_{2},\eta_{1})\\
 & \vdots\\
\bar{x}_{3}^{+} & = & f_{3}(\bar{x}_{rest},\bar{x}_{\bar{k}},\ldots,\bar{x}_{2},\eta_{1})\\
\bar{x}_{2}^{+} & = & f_{2}(\bar{x}_{rest},\bar{x}_{\bar{k}},\ldots,\bar{x}_{2},\eta_{1},\hat{z}_{1})\\
\bar{x}_{1}^{+} & = & f_{1}(\bar{x}_{rest},\bar{x}_{\bar{k}},\ldots,\bar{x}_{2},\eta_{1},\hat{z}_{1},y_{1},\hat{z}_{0})
\end{array}
\]
and
\[
\mathrm{rank}(\partial_{\hat{z}_{1}}f_{2})=\dim(\hat{z}_{1})=\rho_{2}\,.
\]
Consequently, the system is flat if and only if the subsystem $f_{3},\ldots,f_{\bar{k}},f_{rest}$
with the inputs $(\bar{x}_{2},\eta_{1})$ is flat. In the following
steps, we proceed analogously. First, we eliminate all redundant inputs
of the subsystem $f_{k+1},\ldots,f_{\bar{k}},f_{rest}$, $k\geq2$.
Subsequently, we decompose the subsystem by straightening out the
distribution $D_{k}$ with a transformation that can be interpreted
as an input transformation for the subsystem. Continuing this procedure
until $k=\bar{k}-1$ introduces new coordinates on $\mathcal{X}\times\mathcal{U}$
such that the map (\ref{eq:sys_straightened_out_Delta}) has the form
\[
\begin{array}{ccl}
\bar{x}_{rest}^{+} & = & f_{rest}(\bar{x}_{rest},\bar{x}_{\bar{k}},\eta_{\bar{k}-1})\\
\bar{x}_{\bar{k}}^{+} & = & f_{\bar{k}}(\bar{x}_{rest},\bar{x}_{\bar{k}},\eta_{\bar{k}-1},\hat{z}_{\bar{k}-1})\\
 & \vdots\\
\bar{x}_{2}^{+} & = & f_{2}(\bar{x}_{rest},\bar{x}_{\bar{k}},\eta_{\bar{k}-1},\hat{z}_{\bar{k}-1},y_{\bar{k}-1},\ldots,\hat{z}_{2},y_{2},\hat{z}_{1})\\
\bar{x}_{1}^{+} & = & f_{1}(\bar{x}_{rest},\bar{x}_{\bar{k}},\eta_{\bar{k}-1},\hat{z}_{\bar{k}-1},y_{\bar{k}-1},\ldots,\hat{z}_{1},y_{1},\hat{z}_{0})\,,
\end{array}
\]
and shows by a repeated application of Lemma \ref{lem:basic_decomposition_flat}
that the complete system is flat if and only if the subsystem
\begin{equation}
\begin{array}{ccc}
\bar{x}_{rest}^{+} & = & f_{rest}(\bar{x}_{rest},\bar{x}_{\bar{k}},\eta_{\bar{k}-1})\end{array}\label{eq:subsys_f_rest}
\end{equation}
with the inputs $(\bar{x}_{\bar{k}},\eta_{\bar{k}-1})$ is flat.

In the case $\dim(\Delta_{\bar{k}})=n$, because of $\dim(\bar{x}_{rest})=0$
the subsystem (\ref{eq:subsys_f_rest}) is an empty system with inputs
$(\bar{x}_{\bar{k}},\eta_{\bar{k}-1})$. Therefore, the complete system
is flat, and
\begin{equation}
y=(y_{\bar{k}},y_{\bar{k}-1},\ldots,y_{1})\label{eq:constructed_flat_output}
\end{equation}
with $y_{\bar{k}}=(\bar{x}_{\bar{k}},\eta_{\bar{k}-1})$ is a flat
output. The flat output (\ref{eq:constructed_flat_output}) consists
of the inputs of the (empty) system (\ref{eq:subsys_f_rest}), and
the redundant inputs $(y_{\bar{k}-1},\ldots,y_{1})$ of the subsystems
that have been eliminated during the repeated decompositions (cf.
Lemma \ref{lem:flatness_redundant_inputs}). The flat output in original
coordinates can be obtained by applying the inverse coordinate transformations.

For the case $\dim(\Delta_{\bar{k}})<n$, we show by contradiction
that the subsystem (\ref{eq:subsys_f_rest}) with $\dim(\bar{x}_{rest})>0$
cannot be flat. First, we eliminate all redundant inputs by an input
transformation
\[
(\zeta_{\bar{k}},y_{\bar{k}})=\Psi_{\bar{k}}(\bar{x}_{rest},\bar{x}_{\bar{k}},\eta_{\bar{k}-1})
\]
with inverse $(\bar{x}_{\bar{k}},\eta_{\bar{k}-1})=\hat{\Psi}_{\bar{k}}(\bar{x}_{rest},\zeta_{\bar{k}},y_{\bar{k}})$.
If the resulting system
\begin{equation}
\begin{array}{ccc}
\bar{x}_{rest}^{+} & = & f_{rest}(\bar{x}_{rest},\zeta_{\bar{k}})\end{array}\label{eq:subsys_f_rest_elim}
\end{equation}
would be flat, then according to the necessary condition of Theorem
\ref{thm:necessary_condition_flat} there would exist a nontrivial
vector field $v^{l}(\bar{x}_{rest},\zeta_{\bar{k}})\partial_{\zeta_{\bar{k}}^{l}}$
which is projectable with respect to the subsystem (\ref{eq:subsys_f_rest_elim}).
With respect to the complete system, such a vector field would be
contained in the largest projectable subdistribution $D_{\bar{k}}\subset E_{\bar{k}}$,
and accordingly the dimension of $\Delta_{\bar{k}+1}=f_{*}(D_{\bar{k}})$
would be larger than the dimension of $\Delta_{\bar{k}}=f_{*}(D_{\bar{k}-1})$.
However, because of $\dim(\Delta_{\bar{k}+1})=\dim(\Delta_{\bar{k}})$
such a vector field does not exist.
\begin{rem}
The Frobenius theorem, which is used for straightening out the distributions
(\ref{eq:chain_D}) and (\ref{eq:chain_Delta}), guarantees only local
results. Thus, it should be noted that if the distribution $D_{0}$
is considered locally around some point $(\bar{x},\bar{u})\in\mathcal{X}\times\mathcal{U}$,
then the distributions $\Delta_{1}=f_{*}(D_{0})$ and $D_{1}\subset E_{1}=\pi_{*}^{-1}(\Delta_{1})$
are defined locally around the points $f(\bar{x},\bar{u})\in\mathcal{X}^{+}$
and $(f(\bar{x},\bar{u}),\bar{u})\in\mathcal{X}\times\mathcal{U}$,
respectively. Likewise, $\Delta_{2}$ and $D_{2}$ are defined around
$f(f(\bar{x},\bar{u}),\bar{u})\in\mathcal{X}^{+}$ and $(f(f(\bar{x},\bar{u}),\bar{u}),\bar{u})\in\mathcal{X}\times\mathcal{U}$
(and so on). However, considering only a sufficiently small neighborhood
of an equilibrium $(x_{0},u_{0})$ ensures by $x_{0}=f(x_{0},u_{0})$
and the continuity of $f$ that the points $\bar{x},f(\bar{x},\bar{u}),f(f(\bar{x},\bar{u}),\bar{u}),\ldots$
are sufficiently close, such that the regions of validity of all coordinate
transformations overlap. In practice, the regions of validity of the
coordinate transformations -- and consequently the region of validity
of the proof of Theorem \ref{thm:necessary_and_sufficient_conditions}
-- can of course be quite large.
\end{rem}

\section{\label{sec:Example}Example}

In \cite{KolarSchoberlDiwold:2019}, we have already shown that the
system
\begin{equation}
\begin{array}{lcl}
x^{1,+} & = & \tfrac{x^{2}+x^{3}+3x^{4}}{u^{1}+2u^{2}+1}\\
x^{2,+} & = & x^{1}(x^{3}+1)(u^{1}+2u^{2}-3)+x^{4}-3u^{2}\\
x^{3,+} & = & u^{1}+2u^{2}\\
x^{4,+} & = & x^{1}(x^{3}+1)+u^{2}
\end{array}\label{eq:example_sys}
\end{equation}
is flat around the equilibrium $(x_{0},u_{0})=(0,0)$ by repeated
transformations into subsystems and endogenous dynamic feedbacks.
In the following, we prove its flatness again by
simply computing the sequence of distributions (\ref{eq:chain_Delta})
and applying Theorem \ref{thm:necessary_and_sufficient_conditions},
i.e., without actually performing any decompositions like in \cite{KolarSchoberlDiwold:2019}.
In the first step of Algorithm 1, we have to calculate the largest
projectable subdistribution of the distribution $E_{0}=\mathrm{span}\{\partial_{u^{1}},\partial_{u^{2}}\}$.
For this purpose, we introduce adapted coordinates (\ref{eq:adapted_coordinates})
on $\mathcal{X}\times\mathcal{U}$. After the transformation
\begin{equation}
\begin{array}{cclcccl}
\theta^{1} & = & f^{1}(x,u) & \quad & \xi^{1} & = & x^{1}\\
\theta^{2} & = & f^{2}(x,u) &  & \xi^{2} & = & x^{3}\,,\\
\theta^{3} & = & f^{3}(x,u)\\
\theta^{4} & = & f^{4}(x,u)
\end{array}\label{eq:example_adapted_coordinates}
\end{equation}
the vector fields $\partial_{u^{1}}$ and $\partial_{u^{2}}$ are
given by
\[
-\tfrac{\theta^{1}}{\theta^{3}+1}\partial_{\theta^{1}}+\xi^{1}(\xi^{2}+1)\partial_{\theta^{2}}+\partial_{\theta^{3}}
\]
and
\[
-2\tfrac{\theta^{1}}{\theta^{3}+1}\partial_{\theta^{1}}+(2\xi^{1}(\xi^{2}+1)-3)\partial_{\theta^{2}}+2\partial_{\theta^{3}}+\partial_{\theta^{4}}\,.
\]
Because of the presence of the fibre coordinates $\xi^{1}$ and $\xi^{2}$,
neither $\partial_{u^{1}}$ nor $\partial_{u^{2}}$ itself is projectable.
However, the linear combination $-2\partial_{u^{1}}+\partial_{u^{2}}$
reads in adapted coordinates as $-3\partial_{\theta^{2}}+\partial_{\theta^{4}}$,
and is therefore a projectable vector field. Thus, the largest projectable
subdistribution is given by
\[
D_{0}=\mathrm{span}\{-2\partial_{u^{1}}+\partial_{u^{2}}\}\,.
\]
The pushforward $f_{*}(D_{0})$ is the involutive distribution
\[
\Delta_{1}=\mathrm{span}\{-3\partial_{x^{2,+}}+\partial_{x^{4,+}}\}
\]
on $\mathcal{X}^{+}$ with $\dim(\Delta_{1})=\dim(D_{0})=1$. In the
second step, we have to determine the largest projectable subdistribution
of
\[
E_{1}=\mathrm{span}\{-3\partial_{x^{2}}+\partial_{x^{4}},\partial_{u^{1}},\partial_{u^{2}}\}\,.
\]
In adapted coordinates (\ref{eq:example_adapted_coordinates}), it
can be verified that the complete distribution is projectable, i.e.,
$D_{1}=E_{1}$. The pushforward $f_{*}(D_{1})$ is the involutive
distribution
\[
\begin{array}{cl}
\Delta_{2}=\mathrm{span}\{\hspace{-0.3cm} & -3\partial_{x^{2,+}}+\partial_{x^{4,+}},\,\tfrac{x^{1,+}}{x^{3,+}+1}\partial_{x^{1,+}}-\partial_{x^{3,+}},\\
 & \tfrac{2x^{1,+}}{x^{3,+}+1}\partial_{x^{1,+}}-2\partial_{x^{3,+}}-\partial_{x^{4,+}}\}
\end{array}
\]
with $\dim(\Delta_{2})=\dim(D_{1})=3$. In the third step, we have
to find the largest projectable subdistribution of
\[
\begin{array}{cl}
E_{2}=\mathrm{span}\{\hspace{-0.3cm} & -3\partial_{x^{2}}+\partial_{x^{4}},\,\tfrac{x^{1}}{x^{3}+1}\partial_{x^{1}}-\partial_{x^{3}},\\
 & \tfrac{2x^{1}}{x^{3}+1}\partial_{x^{1}}-2\partial_{x^{3}}-\partial_{x^{4}},\,\partial_{u^{1}},\,\partial_{u^{2}}\}\,.
\end{array}
\]
In adapted coordinates (\ref{eq:example_adapted_coordinates}), it
can be verified that again the complete distribution is projectable,
i.e., $D_{2}=E_{2}$. The pushforward $f_{*}(D_{2})$ is the involutive
distribution
\[
\Delta_{3}=\mathrm{span}\{\partial_{x^{1,+}},\partial_{x^{2,+}},\partial_{x^{3,+}},\partial_{x^{4,+}}\}\,.
\]
Here we have $\dim(\Delta_{3})<\dim(D_{2})=5$. However, because of
$\dim(\Delta_{3})=n=4$ we can stop, and according to Theorem \ref{thm:necessary_and_sufficient_conditions}
the system (\ref{eq:example_sys}) is flat. It is important to emphasize
that all these computations require only the solution of algebraic
equations, and can be performed efficiently with a computer algebra
program.

Now let us calculate a flat output. For this purpose, in a first step
we straighten out the sequence (\ref{eq:chain_Delta}) by a state
transformation of the form (\ref{eq:state_transformation_Delta})
with $\bar{x}_{1}=\bar{x}_{1}^{1}$, $\bar{x}_{2}=(\bar{x}_{2}^{1},\bar{x}_{2}^{2})$,
$\bar{x}_{3}=\bar{x}_{3}^{1}$ and $\dim(\bar{x}_{rest})=0$. With
\[
\begin{array}{cclcclccc}
\bar{x}_{1}^{1} & = & x^{4}\,, & \bar{x}_{2}^{1} & = & x^{2}+3x^{4}\,, & \bar{x}_{3}^{1} & = & x^{1}(x^{3}+1)\\
 &  &  & \bar{x}_{2}^{2} & = & x^{3}
\end{array}
\]
we get
\[
\begin{array}{ccl}
\Delta_{1} & = & \mathrm{span}\{\partial_{\bar{x}_{1}^{1,+}}\}\\
\Delta_{2} & = & \mathrm{span}\{\partial_{\bar{x}_{1}^{1,+}},\partial_{\bar{x}_{2}^{1,+}},\partial_{\bar{x}_{2}^{2,+}}\}\\
\Delta_{3} & = & \mathrm{span}\{\partial_{\bar{x}_{1}^{1,+}},\partial_{\bar{x}_{2}^{1,+}},\partial_{\bar{x}_{2}^{2,+}},\partial_{\bar{x}_{3}^{1,+}}\}\,,
\end{array}
\]
and the transformed system (\ref{eq:sys_straightened_out_Delta})
reads
\begin{equation}
\begin{array}{ccl}
\bar{x}_{3}^{1,+} & = & \bar{x}_{2}^{1}+\bar{x}_{2}^{2}\\
\bar{x}_{2}^{1,+} & = & \bar{x}_{1}^{1}+\bar{x}_{3}^{1}(u^{1}+2u^{2})\\
\bar{x}_{2}^{2,+} & = & u^{1}+2u^{2}\\
\bar{x}_{1}^{1,+} & = & \bar{x}_{3}^{1}+u^{2}\,.
\end{array}\label{eq:example_sys_straightened_out_delta}
\end{equation}
In a second step, we combine the coordinate transformations on $\mathcal{X}\times\mathcal{U}$
that are constructed in the proof of Theorem \ref{thm:necessary_and_sufficient_conditions},
and obtain a coordinate transformation
\begin{equation}
\begin{array}{cclcccl}
\bar{x}_{3}^{1} & = & y_{3}^{1} & \quad & u^{1} & = & \hat{z}_{1}^{2}-2\hat{z}_{0}^{1}\\
\bar{x}_{2}^{1} & = & y_{2}^{1} &  & u^{2} & = & \hat{z}_{0}^{1}\\
\bar{x}_{2}^{2} & = & \hat{z}_{2}^{1}-y_{2}^{1}\\
\bar{x}_{1}^{1} & = & \hat{z}_{1}^{1}
\end{array}\label{eq:example_coordinate_transformation_D_combined}
\end{equation}
that straightens out the sequence (\ref{eq:chain_D}) according to
\[
\begin{array}{ccl}
D_{0} & = & \mathrm{span}\{\partial_{\hat{z}_{0}^{1}}\}\\
D_{1} & = & \mathrm{span}\{\partial_{\hat{z}_{0}^{1}},\partial_{\hat{z}_{1}^{1}},\partial_{\hat{z}_{1}^{2}}\}\\
D_{2} & = & \mathrm{span}\{\partial_{\hat{z}_{0}^{1}},\partial_{\hat{z}_{1}^{1}},\partial_{\hat{z}_{1}^{2}},\partial_{y_{2}^{1}},\partial_{\hat{z}_{2}^{1}}\}\,.
\end{array}
\]
Applying the transformation (\ref{eq:example_coordinate_transformation_D_combined})
and its shifted version to the right-hand side and the left-hand side
of the system equations (\ref{eq:example_sys_straightened_out_delta})
results in a structurally flat implicit\footnote{The coordinate transformations on $\mathcal{X}\times\mathcal{U}$
constructed in the proof of Theorem \ref{thm:necessary_and_sufficient_conditions}
can be interpreted as input transformations for the corresponding
subsystems. However, with regard to the complete system, they do not
necessarily preserve the separation into state- and input variables.
Thus, the transformation can yield in general an implicit system representation.
For the system (\ref{eq:example_triangular_form}), however, by rearranging
the equations of the block $\Xi_{2}$ also an explicit triangular
form would be possible.} triangular form
\begin{equation}
\begin{array}{cl}
\Xi_{3}: & \begin{array}{c}
y_{3}^{1,+}-\hat{z}_{2}^{1}=0\end{array}\\
\Xi_{2}: & \begin{array}{l}
y_{2}^{1,+}-y_{3}^{1}\hat{z}_{1}^{2}-\hat{z}_{1}^{1}=0\\
\hat{z}_{2}^{1,+}-y_{2}^{1,+}-\hat{z}_{1}^{2}=0
\end{array}\\
\Xi_{1}: & \begin{array}{c}
\hat{z}_{1}^{1,+}-y_{3}^{1}-\hat{z}_{0}^{1}=0\end{array}
\end{array}\label{eq:example_triangular_form}
\end{equation}
as it is discussed in \cite{KolarSchoberlSchlacher:2016-2} (see \cite{SchoberlSchlacher:2014}
for a continuous-time counterpart). This triangular representation
allows to read off a flat output $y=(y_{3}^{1},y_{2}^{1})$ and to
systematically determine the parameterization of the other system
variables by evaluating the equations (\ref{eq:example_triangular_form})
from top to bottom. From the topmost block $\Xi_{3}$ we get the parameterization
of the variable $\hat{z}_{2}^{1}$, from $\Xi_{2}$ we get $\hat{z}_{1}^{1}$
and $\hat{z}_{1}^{2}$, and from $\Xi_{1}$ we finally get $\hat{z}_{0}^{1}$.
In original coordinates, the flat output is given by $y=(x^{1}(x^{3}+1),x^{2}+3x^{4})$.

\bibliographystyle{IEEEtran}
\bibliography{IEEEabrv,Bibliography_Bernd}

\end{document}